\author{Luca Martinazzi\thanks{The first author was partially supported by the ETH Research Grant no. ETH-02 08-2 and by the Italian FIRB Ideas ``Analysis and Beyond".}\\ \small{Centro De Giorgi, Pisa} \\ \footnotesize{\texttt{luca.martinazzi@sns.it}}
\and Mircea Petrache\\ \small{ETH Zurich} \\ \footnotesize{\texttt{mircea.petrache@math.ethz.ch}}}
\title{Existence of solutions to a higher dimensional mean-field equation on manifolds}
\date{January 26, 2010}
\newtheorem{trm}{Theorem}
\newtheorem{lemma}[trm]{Lemma}
\newcommand{\R}[1]{\mathbb{R}^{#1}}
\newcommand{\de}{\partial}
\newcommand{\ve}{\varepsilon}
\newenvironment{proof}{\noindent\emph{Proof.}}{\hfill$\square$\medskip}
\DeclareMathOperator{\vol}{vol}
\begin{document}
\maketitle

\begin{abstract}
For $m\geq 1$ we prove an existence result for the equation
$$(-\Delta_g)^m u+\lambda=\lambda\frac{e^{2mu}}{\int_M e^{2mu}d\mu_g}$$
on a closed Riemannian manifold $(M,g)$ of dimension $2m$ for certain values of $\lambda$.
\end{abstract}

\section{Introduction and statement of the main result}

Let $T^2 \simeq S^1\times S^1$ be the $2$-dimensional flat torus of volume one. Motivated by the study of vortices in the Chern-Simons Gauge theory, M. Struwe and G. Tarantello \cite{ST} showed that for $\lambda\in ]4\pi,2\pi^2[$, the following equation admits a non-trivial solution\footnote{Actually \cite{ST} deals with the equation $-\Delta u+\lambda=\lambda\frac{e^{u}}{\int_{T^2}e^{u}dx}$, but upon defining $\tilde u:=2u$, $\tilde \lambda=2\lambda$ one can pass from one equation to the other.}
\begin{equation}\label{eq1}
-\Delta u+\lambda=\lambda\frac{e^{2u}}{\int_{T^2}e^{2u}dx} \quad\textrm{on }T^2.
\end{equation}

In this paper we generalize this result by considering an arbitrary closed Riemannian manifold $(M,g)$ of dimension  $2m$, and studying the equation
\begin{equation}\label{eq3}
(-\Delta_g)^m u+\lambda=\lambda\frac{e^{2mu}}{\int_Me^{2mu}d\mu_g}\quad \text{on }M,
\end{equation}
where $\Delta_g$ is the Laplace-Beltrami operator. The main theorem we shall prove is the following.
\begin{trm}\label{maintrm} Let $\lambda_1=\lambda_1(M)$ be the smallest eigenvalue of $(-\Delta_g)^m$ and $\Lambda_1:=(2m-1)!\vol(S^{2m})$. Assume that $\Lambda_1/\vol(M) <\lambda_1/(2m)$. Then for every
$\lambda\in]\Lambda_1/\vol(M) ,\lambda_1/(2m)[$, $\lambda\not\in \frac{\Lambda_1\mathbb{N}}{\vol(M)}$, \eqref{eq3} has a non-constant solution.
\end{trm}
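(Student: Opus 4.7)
The equation is the Euler--Lagrange equation of the functional
\[
J_\lambda(u) = \frac{1}{2}\int_M u(-\Delta_g)^m u\, d\mu_g + \lambda\int_M u\, d\mu_g - \frac{\lambda}{2m}\log\int_M e^{2mu}\, d\mu_g,
\]
naturally defined on the Sobolev space $H^m(M)$. Since all constants are trivial critical points, the goal is to produce a \emph{non}-constant one via a mountain-pass scheme around $u\equiv 0$, in the spirit of Struwe--Tarantello \cite{ST}.

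The first step is to set up the mountain-pass geometry. The upper bound $\lambda<\lambda_1/(2m)$ makes $u\equiv 0$ a strict local minimum of $J_\lambda$: expanding to second order along $\ve v$ with $\int_M v\, d\mu_g=0$ produces a positive definite quadratic form, thanks to the Poincar\'e-type inequality $\int_M v(-\Delta_g)^m v\, d\mu_g\ge \lambda_1 \int_M v^2\, d\mu_g$. At the opposite end, the sharp Adams--Moser--Trudinger inequality in dimension $2m$, with best constant $\Lambda_1=(2m-1)!\vol(S^{2m})$, implies that the exponential term in $J_\lambda$ overwhelms the quadratic one as soon as $\lambda>\Lambda_1/\vol(M)$: testing $J_\lambda$ along a family of concentrating bubbles modeled on the spherical entire solutions of $(-\Delta)^m U=(2m-1)!e^{2mU}$ on $\R{2m}$, one checks $J_\lambda\to-\infty$, producing a point $u_1$ with $J_\lambda(u_1)<J_\lambda(0)$ on the far side of the hill.

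The usual min-max then yields a Palais--Smale sequence at the mountain-pass level. In this range, however, Palais--Smale fails in general because of concentration phenomena, so I would bypass the difficulty via Struwe's monotonicity trick: $\lambda\mapsto J_\lambda/\lambda$ is monotone along suitable paths, and this produces \emph{bounded} Palais--Smale sequences for a.e.\ $\lambda$ in the open interval $]\Lambda_1/\vol(M),\lambda_1/(2m)[$.

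The hardest step is the concentration-compactness analysis of such bounded approximate critical points. Adapting the Brezis--Merle / Druet / Malchiodi philosophy to the polyharmonic $Q$-curvature type equation, a bounded PS sequence that fails to converge must concentrate at finitely many points of $M$, and the energy accumulated at each blow-up point must be an integer multiple of $\Lambda_1$, with the local profile modeled on the classified entire solutions of $(-\Delta)^m U=(2m-1)!e^{2mU}$ on $\R{2m}$. Therefore the limit mass lies in $\Lambda_1\N/\vol(M)$, and the non-resonance hypothesis $\lambda\notin\Lambda_1\N/\vol(M)$ rules concentration out, forcing strong convergence to a critical point $u$ of $J_\lambda$; since the min-max value strictly exceeds $J_\lambda(0)$, this $u$ must be non-constant. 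Finally, a solution is obtained for every $\lambda$ in the stated set (not only for a.e.\ such $\lambda$) by selecting sequences $\lambda_k\to\lambda$ of ``good'' parameters and invoking the same quantization argument to pass to the limit in the corresponding solutions $u_k$. The core obstacle is precisely this quantization-of-mass step: for $(-\Delta_g)^m$ with $m\ge 2$ the blow-up analysis is substantially more delicate than the second-order case handled in \cite{ST}, both because one must deal with the full polyharmonic operator (controlling lower-order derivatives and boundary terms in integration by parts) and because the classification of entire polyharmonic solutions on $\R{2m}$ is itself more subtle.
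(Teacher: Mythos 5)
Your outline follows the paper's strategy closely: work on the mean-zero space, show $u\equiv 0$ is a strict local minimum of the functional for $\lambda<\lambda_1/(2m)$, construct a family of concentrating bubbles (modeled on $\log\frac{2\sigma}{1+\sigma^2|x|^2}$) to show the functional is unbounded below for $\lambda>\Lambda_1$, run Struwe's monotonicity trick to get \emph{bounded} Palais--Smale sequences for a.e.\ $\lambda$ in the interval, and then pass to all non-resonant $\lambda$ via the blow-up quantization theorem. This is exactly the paper's scheme.

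The one place you deviate, and where you should be careful, is the treatment of the bounded PS sequences. You propose to run a Brezis--Merle/Druet/Malchiodi-type concentration analysis on the PS sequence itself, calling this the hardest step. The paper does not do this, and for good reason: the quantization results you cite (as well as \cite{mar2}, which the paper actually uses) are stated and proved for sequences of \emph{exact solutions} of the PDE, not for approximate critical points, and adapting them to PS sequences is a nontrivial extension. The paper sidesteps this entirely: once the PS sequence is \emph{bounded in $E$}, Fontana's form of the Adams inequality makes $u\mapsto e^{2mu}$ compact from bounded sets of $E$ into $L^2(M)$, so $e^{2mu_n}\to e^{2mu}$ in $L^2$ and the identity $o(1)=\langle I'_\mu(u_n),u_n-u\rangle=\|u_n-u\|^2+o(1)$ already gives strong convergence (this is Lemma~\ref{exist1}). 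The quantization theorem (Theorem~\ref{trmMP1}) is invoked only in the very last step, applied to a sequence $(u_k)$ of \emph{solutions} at parameters $\lambda_k\to\lambda$, which is where it is both needed and legitimately applicable; the non-resonance hypothesis $\lambda\notin\Lambda_1\N$ then rules out the blow-up alternative. So your final paragraph is right, but the earlier claim that one must do blow-up analysis on PS sequences is both unnecessary and, as stated, unjustified by the results you invoke.
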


It is easy to see that in the case when $M=T^{2m}$ is the flat torus of dimension $2m$, one has $\Lambda_1/\vol(M) <\lambda_1/(2m)$ for every $m\geq 1$, hence the theorem applies.

Notice that given a solution $u$ to \eqref{eq3}, $u+\alpha$ is also a solution for any constant $\alpha\in\R{}$, hence it is not restrictive to assume that $\int_M ud\mu_g=0$. Moreover, by a simple scaling argument we can assume that $\vol(M)=1$.

Equation \eqref{eq3} is a model for the intensively studied problems of existence and compactness properties of elliptic equations of order $4$ and higher with critical non-linearity. In fact, other than the result of Theorem \ref{maintrm} itself, also the proof is interesting, as it rests on some recent compactness results for equations arising in conformal geometry. For this reason we shall now briefly describe its strategy, which is inspired to \cite{ST}.

\medskip

Let us consider the space
$$E:=\Big\{u\in H^m(M):\int_M u d\mu_g=0\Big\},$$
with the norm
$$\|u\|:=\bigg(\int_M|\Delta_g^{\frac{m}{2}} u|^2d\mu_g\bigg)^\frac{1}{2},$$
where $\Delta^{\frac{k}{2}}_g u:=\nabla_g\Delta_g^{\frac{k-1}{2}}u$ if $k$ is odd. Then weak solutions of \eqref{eq3} are critical points of the functional
$$I_\lambda(u)=\frac{1}{2}\int_M|\Delta_g^{\frac{m}{2}} u|^2d\mu_g-\frac{\lambda}{2m} \log\bigg(\int_Me^{2mu}d\mu_g\bigg)$$
on $E$. 
By the Adams-Moser-Trudinger inequality (see \cite{adams} and Fontana \cite{fontana}), we have
\begin{equation}\label{ams}
\sup_{u\in E}\int_M e^{m\Lambda_1 \frac{u^2}{\|u\|^2}}d\mu_g<\infty, 
\end{equation}
where $\Lambda_1=(2m-1)!\vol(S^{2m})$ is the total $Q$-curvature of the round sphere of dimension $2m$, see e.g. \cite{mar1}. Then writing $2mu\leq m\Lambda_1\frac{u^2}{\|u\|^2}+\frac{m}{\Lambda_1}\|u\|^2,$ we find
\begin{equation}\label{eq4}
I_\lambda(u)\geq \bigg(\frac{1}{2}-\frac{\lambda}{2\Lambda_1}\bigg)\|u\|^2-C.
\end{equation}
Therefore $I_\lambda$ is bounded from below and coercive on $E$ for $\lambda\leq \Lambda_1$.

We shall see (Lemma \ref{poincare}) that $u\equiv 0$, which is a trivial solution to \eqref{eq3}, is a strict local minimum of $I_\lambda$ if $\lambda< \lambda_1/2m$. Moreover for $\lambda>\Lambda_1$ there always exists a function $u\in E$ such that $I_{\lambda}(u)<I_\lambda(0)=0$ (Lemma \ref{usigma}). This suggests that a mountain-pass technique might be used. In fact, as in \cite{ST}, one can use a technique of M. Struwe \cite{str} to construct a converging Palais-Smale sequence for the functional $I_\lambda$ for almost all $\lambda\in ]\Lambda_1,\lambda_1/2m[$.

In order to pass from the existence for \emph{almost every} $\lambda\in ]\Lambda_1,\lambda_1/2m[$ to the existence for \emph{all} $\lambda\in ]\Lambda_1,\lambda_1/2m[\backslash \Lambda_1\mathbb{N}$, we need a compactness argument. Given $\lambda_k$ for which a non-trivial solution $u_k$ exists, and assuming that $\lambda_k\to \lambda$, can we say that $u_k$ converges (up to a subsequence) in a good norm ($C^0$ for instance\footnote{By elliptic estimates, convergence in $C^0$ implies convergence in $C^k$ for every $k>0$.})?

In dimension $2$ this question was addressed by Brezis-Merle \cite{BM} and Li-Shafrir \cite{LS}; their result implies that if the sequence $(u_k)$ is not precompact, then $\lambda_k\to N\Lambda_1$ for some $N\in \mathbb{N}$, contrary to our assumption on $\lambda$.
As shown in \cite{ARS}, things are more subtle in higher dimension, and we cannot work locally as in \cite{ST}. Instead,  we can rely on a recent result by the first author \cite{mar2} specific for closed manifolds (see also \cite{DR}, \cite{mal} and \cite{ndi}) to obtain compactness for the sequence $(u_k)$, unless $\lambda_k\to N\Lambda_1$ for some $N\in\mathbb{N}$.

Roughly speaking, the geometric constant $\Lambda_1$ enters our problems as follows: if the sequence $(u_k)$ is not precompact, then up to a subsequence, $u_k$ concentrates at finitely many points. A blow-up argument at such points shows that the concentration profile is precisely that of a round sphere with total $Q$-curvature $\Lambda_1$.

\medskip

Related to the work of Struwe and Tarantello, several other results have been proven about the existence theory for \eqref{eq1}. For instance Ding, Jost, Li and Wang proved existence for the mean-field equation $-\Delta u=\lambda \frac{e^{2u}}{\int_\Omega e^{2u}dx}$  on an annulus $\Omega$, with boundary datum $u=0$ on $\de \Omega$ for $\lambda\in (4\pi, 8\pi)$.
Z. Djadli \cite{dja} proved the existence of solutions to \eqref{eq1} for every $\lambda \in \R{}\backslash 4\pi\mathbb{N}$. F. De Marchis \cite{dem} proved the existence of at least $2$ non-trivial solutions when $\lambda\in (4\pi, 4\pi^2)$, also in the case of a torus with nonflat metric. We refer to this last work for a more comprehensive survey of $2$-dimensional results.

These and other works usually rest on topological arguments, sometimes much more subtle than a mountain-pass principle. But a common feature is the presence of a compactness argument, which is the reason why the values $\lambda\in 4\pi\mathbb{N}$ cannot be treated. It is reasonable to believe that using the compactness result from \cite{mar2} as we did here, also these more general works can be generalized to higher dimensional manifolds and to more general semilinear equations with asymptotically exponential non-linearity. Also in this sense our Theorem \ref{maintrm} can be seen as a model situation.

\medskip

The paper is organized as follows. In Section \ref{sec2} we show that for $\lambda\in ]\Lambda_1,\lambda_1/2m[$ the constant function $u\equiv 0$ is a strict local minimum of $I_\lambda$ and that $I_\lambda$ is unbounded from below. In Section \ref{sec3} we prove the existence of a non-trivial solution to \eqref{eq3} for almost every $\lambda\in ]\Lambda_1,\lambda_1/2m[$. In Section \ref{sec4} we complete the proof of Theorem \ref{maintrm}. Finally, in Section \ref{sec5} we show that, similarly to the $2$-dimensional case, for $\lambda>0$ small enough the only solution to \eqref{eq3} is $u\equiv 0$.

\medskip

In the following, the letter $C$ denotes a generic positive constant, which may change from line to line and even within the same line.

\section{Two fundamental lemmas}\label{sec2}

We now show that for $\lambda\in ]\Lambda_1,\lambda_1/(2m)[$ the functional $I_\lambda$ is unbounded from below on $E$ and $0$ is a strict local minimum for $I_\lambda$.\\

Recall that there exists an optimal constant $C_0>0$ such that for all $v\in E$ there holds
$$
 \int_Mv^2d\mu_g\leq C_0\int_M|\nabla v|^2d\mu_g.
$$
In fact $C_0$ is the inverse of the smallest eigenvalue of $-\Delta_g$. 

\begin{lemma}\label{poincare} Let $\lambda< \frac{\lambda_1(M)}{2m}$, where $\lambda_1(M)=\frac{1}{C_0^m}$ is the smallest eigenvalue of $(-\Delta_g)^m$. Then the function $u\equiv 0$ is a strict local minimum for $I_\lambda$.
\end{lemma}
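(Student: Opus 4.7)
The plan: since we assume $\vol(M) = 1$ and every $u \in E$ satisfies $\int_M u\, d\mu_g = 0$, I have $I_\lambda(0) = 0$, so the goal becomes showing $I_\lambda(u) > 0$ for every $u \in E$ with $0 < \|u\| \leq \delta$, for a suitably small $\delta > 0$.

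The central analytic step is the second-order upper bound
$$\log \int_M e^{2mu}\, d\mu_g \leq 2m^2 \int_M u^2\, d\mu_g + O(\|u\|^3) \quad\text{as } \|u\| \to 0.$$
To derive it, I would apply the pointwise Taylor estimate $e^x \leq 1 + x + \tfrac{x^2}{2} + \tfrac{|x|^3}{6} e^{|x|}$ with $x = 2mu$. Integrating, the linear term $\int_M u\, d\mu_g$ vanishes by the normalization on $E$, giving
$$\int_M e^{2mu}\, d\mu_g \leq 1 + 2m^2 \int_M u^2\, d\mu_g + \tfrac{4m^3}{3}\int_M |u|^3 e^{2m|u|}\, d\mu_g.$$
For the cubic remainder, I split via Hölder with conjugate exponents $p,q$: the factor $\bigl(\int_M |u|^{3p}\, d\mu_g\bigr)^{1/p}$ is $O(\|u\|^3)$ by the Sobolev embedding $H^m(M) \hookrightarrow L^{3p}(M)$, valid for any finite $p$ in the critical dimension $2m$, while $\bigl(\int_M e^{2mq|u|}\, d\mu_g\bigr)^{1/q}$ stays uniformly bounded for $\|u\|$ in a small ball, thanks to Young's inequality $2mq|u| \leq m\Lambda_1 u^2/\|u\|^2 + C_q \|u\|^2$ combined with \eqref{ams}. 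The estimate then follows from $\log(1+t) \leq t$.

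Combining with the Poincaré-type inequality $\int_M u^2\, d\mu_g \leq \lambda_1^{-1}\|u\|^2$, which is just the variational characterization of $\lambda_1$ as the smallest eigenvalue of $(-\Delta_g)^m$ on $E$, I obtain
$$I_\lambda(u) \geq \left(\frac{1}{2} - \frac{m\lambda}{\lambda_1}\right)\|u\|^2 - C\|u\|^3.$$
The hypothesis $\lambda < \lambda_1/(2m)$ makes the leading coefficient strictly positive, so for $\|u\|$ small and nonzero the cubic remainder is absorbed and $I_\lambda(u) > 0 = I_\lambda(0)$, proving strict local minimality.

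The delicate point, and the main obstacle, is controlling the Moser--Trudinger factor $\int_M e^{2mq|u|}\, d\mu_g$ uniformly: one must choose $q$ so that the Young-type splitting realigns $2mq|u|$ with the critical exponent $m\Lambda_1 u^2/\|u\|^2$ in \eqref{ams}, which works precisely because we are in a bounded ball of $E$. The rest of the argument is bookkeeping with the Taylor and Sobolev inequalities.
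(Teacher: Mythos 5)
Your proof is correct, and it takes a genuinely different route from the paper's. The paper argues abstractly: it notes that $I_\lambda$ is smooth on $E$, computes the Hessian $I''_\lambda(0)(v,v)=\|v\|^2-2m\lambda\int_M v^2\,d\mu_g$, bounds it from below by $\bigl(1-\tfrac{2m\lambda}{\lambda_1}\bigr)\|v\|^2$ using the Poincar\'e-type characterization \eqref{poinc} of $\lambda_1$, and concludes strict local minimality from positive definiteness of $I''_\lambda(0)$ at the critical point $0$. You instead avoid invoking the second-derivative criterion (and the blanket claim that $I_\lambda$ is $C^2$ near $0$, which the paper states but does not verify) by carrying out the second-order Taylor expansion of $u\mapsto\log\int_M e^{2mu}\,d\mu_g$ explicitly, using $\int_M u\,d\mu_g=0$ and $\vol(M)=1$ to kill the constant and linear terms, and controlling the cubic remainder via H\"older together with the borderline Sobolev embedding $H^m(M)\hookrightarrow L^{3p}(M)$ in dimension $2m$ and the Adams--Fontana inequality \eqref{ams}. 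Both arguments isolate the same quadratic form and both close with the same eigenvalue bound $\int_M u^2\,d\mu_g\leq\lambda_1^{-1}\|u\|^2$, arriving at the identical leading coefficient $\tfrac12-\tfrac{m\lambda}{\lambda_1}>0$. The paper's version is shorter; yours is self-contained, and it has the merit of making explicit exactly the Moser--Trudinger machinery that underlies the smoothness of $I_\lambda$ which the paper takes for granted.
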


\begin{proof} Since $I_\lambda$ is smooth on $E$, it suffices to show that $I''_{\lambda}(0)$ is positive definite on $E$.
We know that $-\Delta_g$ is injective on $E$ and has an $L^2$-orthonormal  basis of eigenfunctions. Moreover, for $k>0$ and if $v_j\in E$ is the eigenfunction corresponding to the eigenvalue $\lambda_j$ of $-\Delta_g$ we have
$$
(-\Delta_g)^kv_j=(\lambda_j)^kv_j,
$$
hence $\{v_j\}$ is also an orthonormal basis of eigenfunctions for $(-\Delta_g)^m$, whose smallest eigenvalue is therefore $C_0^m$. Moreover
\begin{equation}\label{poinc}
C_0^m=\sup_{\|v\|=1}\int_M v^2d\mu_g,
\end{equation}
so that $C_0^m$ is the best constant such that for $v\in E$ there holds 
\begin{equation}\label{I''}
I''_\lambda(0)(v,v)=\|v\|^2-2m\lambda\int_M v^2d\mu_g	\geq\bigg(1-\frac{2m\lambda}{\lambda_1(M)}\bigg)\|v\|^2,
\end{equation}
and the result of the lemma easily follows.
\end{proof}

% \begin{lemma}
%  If $\lambda\geq \tfrac{\lambda_1}{2m}$ and the manifold $M$ is substituted by a domain $\Omega\in\R{2m}$, then equation \eqref{eq3} has no solution.
% \end{lemma}
% 
% \begin{proof} In this case we know that the first eigenfunction of the Laplacian is positive.
%  Suppose that a solution $u$ to \eqref{eq3} existed. Call $\varphi_1>0$ the first eigenfunction of $(-\Delta_g)^m$, corresponding to the eigenvalue $\lambda_1$. Then we have:
% \begin{eqnarray*}
% \lambda_1\int_M u\varphi_1 d\mu_g&=&\int_M(-\Delta_g)^mu\varphi_1d\mu_g\\
% &=&\lambda\int_M \varphi_1\left(e^{2mu}-1\right)d\mu_g\\
% &\geq& 2m\lambda\int_M u\varphi_1 d\mu_g
% \end{eqnarray*}
% 
% \end{proof}

According to \eqref{eq4} $I_\lambda$ is bounded from below for $\lambda\leq \Lambda_1$. The following lemma shows that this result is sharp.

\begin{lemma}\label{usigma} There is a one-parameter family of functions $(u_\sigma)_{\sigma>0}\subset E\cap C^\infty(M)$ such that for every $\lambda>0$
\begin{eqnarray}
\|u_\sigma\|&=&(2\Lambda_1+o(1))\log\sigma,\label{eqB}\\
I_\lambda(u_\sigma)&=&(\Lambda_1-\lambda+o(1))\log\sigma\label{eqA}
\end{eqnarray}
with error $o(1)\to 0$ as $\sigma\rightarrow\infty$. In particular, if $\lambda>\Lambda_1$ then $I_\lambda$ is not bounded from below.
\end{lemma}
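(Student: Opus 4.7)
My plan is to use bubble-type test functions concentrated at a chosen point $p\in M$. Recall that on $\R{2m}$ the standard bubble $U_1(y):=\log\frac{2}{1+|y|^2}$ solves $(-\Delta)^m U_1=(2m-1)!\,e^{2mU_1}$, with $\int_{\R{2m}}(2m-1)!\,e^{2mU_1}\,dy=\Lambda_1$ and $U_1(y)\sim -2\log|y|$ at infinity. Its rescaling is $U_\sigma(x):=U_1(\sigma x)+\log\sigma=\log\frac{2\sigma}{1+\sigma^2|x|^2}$. I would fix a smooth function $\phi:M\to[0,\infty)$ with $\phi(p)=0$, $\phi>0$ on $M\setminus\{p\}$, and $\phi(x)=d_g(x,p)^2$ on a geodesic ball $B_{r_0}(p)$, and set
$$v_\sigma(x):=\log\frac{2\sigma}{1+\sigma^2\phi(x)}\in C^\infty(M),\qquad u_\sigma:=v_\sigma-\bar v_\sigma,\quad \bar v_\sigma:=\int_M v_\sigma\,d\mu_g,$$
so that $u_\sigma\in E$.

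For the norm estimate, since constants are killed by $\Delta_g^{m/2}$, we have $\|u_\sigma\|^2=\|v_\sigma\|^2$. On $M\setminus B_r(p)$ for small fixed $r$, $\phi$ is bounded below, so $v_\sigma=-\log\sigma+O(1)$ with all derivatives uniformly bounded in $\sigma$; this region contributes $O(1)$ to the norm. On $B_r(p)$, in normal coordinates we have $v_\sigma=U_\sigma$, and since $g^{ij}(x)=\delta_{ij}+O(|x|^2)$, replacing $\Delta_g^{m/2}$ by the Euclidean $\Delta^{m/2}$ yields only negligible corrections on the concentration scale $|x|\sim 1/\sigma$. Changing variables $y=\sigma x$ then reduces the main contribution to $\int_{B_{\sigma r}}|\Delta^{m/2}U_1|^2\,dy$, and the key asymptotic
$$\int_{B_R}|\Delta^{m/2}U_1|^2\,dy=2\Lambda_1\log R+O(1),\qquad R\to\infty,$$
would follow from integration by parts: the interior term $\int U_1(-\Delta)^m U_1\,dy=(2m-1)!\int U_1 e^{2mU_1}\,dy$ is an absolutely convergent constant, while the logarithmic divergence arises entirely from boundary integrals on $\partial B_R$ controlled by the far-field $U_1\sim -2\log|y|$ together with the explicit formulas for $\Delta^j\log|y|$ in $\R{2m}$. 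This yields $\|u_\sigma\|^2=2\Lambda_1\log\sigma+O(1)$, matching \eqref{eqB}.

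For the exponential integral and the mean, the region $\{\sigma^2\phi\leq 1\}$ around $p$ has volume $O(\sigma^{-2m})$ and contributes $o(1)$ to $\bar v_\sigma$, while on its complement $v_\sigma=-\log\sigma+\log(2/\phi)+o(1)$ uniformly; using $\vol(M)=1$, this gives $\bar v_\sigma=-\log\sigma+O(1)$. Similarly, $y=\sigma x$ on $B_r(p)$ produces $\int_{B_r(p)}e^{2mU_\sigma}\,d\mu_g\to\int_{\R{2m}}e^{2mU_1}\,dy=\vol(S^{2m})$, with the complement contributing $O(\sigma^{-2m})$; hence $\log\int_M e^{2mv_\sigma}\,d\mu_g=O(1)$ and
$$\log\int_M e^{2mu_\sigma}\,d\mu_g=-2m\bar v_\sigma+\log\int_M e^{2mv_\sigma}\,d\mu_g=2m\log\sigma+O(1).$$
Substituting into $I_\lambda$ yields $I_\lambda(u_\sigma)=\frac12\|u_\sigma\|^2-\frac{\lambda}{2m}\log\int_M e^{2mu_\sigma}\,d\mu_g=(\Lambda_1-\lambda+o(1))\log\sigma$, which is \eqref{eqA}; in particular, for $\lambda>\Lambda_1$ we get $I_\lambda(u_\sigma)\to-\infty$.

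The main obstacle I anticipate is pinning down the coefficient $2\Lambda_1$ in the asymptotic for $\int_{B_R}|\Delta^{m/2}U_1|^2\,dy$: for general $m$ this requires tracking several boundary integrals on $\partial B_R$ from iterated integration by parts, each involving products of the log-growing tail of $U_1$ with polynomially decaying derivatives. The remaining ingredients---the normal-coordinate Jacobian and metric corrections, the bounds on the ``far'' contributions, and the mean computation---are essentially routine.
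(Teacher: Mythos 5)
Your construction is correct in outline but takes a genuinely different route from the paper. The paper builds $v_\sigma$ on the Euclidean ball $B_1$ by gluing the bubble $\log\frac{2\sigma}{1+\sigma^2|x|^2}$ to the constant $\log\frac{2\sigma}{1+\sigma^2}$ with a cutoff $\varphi$ supported in $B_{1/2}$, then transplants it to $M$ via $f_\alpha=\exp_p(\alpha\,\cdot)$, introducing a second scale $\alpha=\alpha(\sigma)$; the norm computation is then carried out in the dilated metric $h_\alpha=\alpha^{-2}f_\alpha^*g$, which converges to $|dx|^2$ in $C^\ell(B_1)$ as $\alpha\to0$, so the whole metric error is bounded by $\varepsilon(\alpha)\cdot 2\Lambda_1\log\sigma$ and absorbed by choosing $\alpha\to0$ with $\sigma\alpha\to\infty$. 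You instead define the test function globally through a profile $\phi$ with $\phi=d_g(\cdot,p)^2$ near $p$, avoiding both the cutoff and the auxiliary scale. This is cleaner, but it shifts the burden onto the claim that $\int_{B_r}|\Delta_g^{m/2}v_\sigma|^2d\mu_g-\int_{B_r}|\Delta^{m/2}v_\sigma|^2dx=o(\log\sigma)$: here the metric corrections are only $O(|x|^2)$ near $p$ rather than uniformly small, so one must trade them against the decay $|\partial_x^j v_\sigma|\lesssim|x|^{-j}$ for $|x|>1/\sigma$ and verify that all commutator terms in $\Delta_g^{m/2}-\Delta^{m/2}$ contribute $O(1)$ after integration — true, but real bookkeeping for general $m$. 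The paper's $\alpha$ device sidesteps exactly this. You also propose to obtain the Euclidean asymptotic $\int_{B_R}|\Delta^{m/2}U_1|^2dy=2\Lambda_1\log R+O(1)$ by integration by parts and far-field control of $U_1\sim-2\log|y|$; the paper avoids the boundary-term tracking you flag as the main obstacle by writing $|\Delta^{m/2}w_\sigma|$ in closed form, $2^m(m-1)!\sigma^m\frac{\sigma^m r^m+p(\sigma r)}{(1+\sigma^2r^2)^m}$ with $\deg p\leq m-1$, and reducing to a one-dimensional change of variable. Your estimates for $\bar v_\sigma$ and $\log\int_Me^{2mv_\sigma}d\mu_g$ are essentially the same as the paper's and are fine.
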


\begin{proof} We divide the proof into steps.

\medskip

\noindent \emph{Step 1: Construction of $u_\sigma$ and proof of \eqref{eqB}}. Let $\varphi\in C^\infty_c(B_1)$ be a radially symmetric function such that $0\leq \varphi\le 1$ on $B_1$, $\varphi\equiv 1$ on $B_{1/4}$ and $\varphi\equiv 0$ on $B_1\backslash B_{1/2}$.
Set
$$v_\sigma(x):=\varphi(x)\log\bigg(\frac{2\sigma}{1+\sigma^2|x|^2}\bigg)+(1-\varphi(x))\log\bigg(\frac{2\sigma}{1+\sigma^2}\bigg),\quad x\in B_1,$$
so that $v_\sigma\in C^\infty(B_1)$ and, since $r\mapsto\log(2\sigma/(1+\sigma^2r^2))$ is decreasing and $\varphi\geq 0$, there holds
\begin{equation}\label{vs}
\log\bigg(\frac{2\sigma}{1+\sigma^2}\bigg)\le v_\sigma(x)\leq \log\bigg(\frac{2\sigma}{1+\sigma^2|x|^2}\bigg),\quad x\in B_1.
\end{equation}
Set $w_\sigma(r):=\log(\tfrac{2\sigma}{1+\sigma^2r^2})$ and (with an abuse of notation) write $\varphi(x)=\varphi(r)$, with $r:=|x|$, so that 
$$
v_\sigma(x)=\varphi(r)w_\sigma(r) + (1-\varphi(r))w_\sigma(1).
$$
For two radial functions $f(r),g(r)$ we have
\begin{equation}\label{radiallaplacian}
 \Delta(fg)=f\Delta g+g\Delta f+2f'g',\quad \nabla(fg)=f\nabla g+g\nabla f,
\end{equation}
and
$$
\Delta f=f''+\frac{2m-1}{r}f',\quad  \nabla f(x)=\frac{x}{|x|} f'(|x|),
$$
hence, up to identifying $\nabla f(x)$ with $f'(|x|)$,
we may repeatedly use \eqref{radiallaplacian} to get
\begin{equation}\label{lapv}
\Delta^{\frac{m}{2}}v_\sigma=\varphi\Delta^{\frac{m}{2}}w_\sigma+\sum_{\substack{j+k+\ell=m\\j,\ell\geq 0,\;k\geq 1}}C_{jk\ell m}\frac{\de_r^j(w_\sigma-w_\sigma(1)) \de_r^k\varphi}{r^\ell},
\end{equation}
for some dimensional constants $C_{jk\ell m}$. Observe that $\de_r^k\varphi$ is supported in $B_{1/2}\setminus B_{1/4}$ for $k\geq 1$, and $\|\de_r^k\varphi\|_{L^\infty}\leq C(k)$ for every $k\geq 0$.
We now claim that
\begin{equation}\label{Wj}
|\de_r^j(w_\sigma(r)-w_\sigma(1))|=O(r^{-j})\quad \text{for } j\geq 0,\; \frac{1}{4}\le r\le \frac{1}{2},
\end{equation}
as $\sigma\to \infty$.
Indeed for $j=0$ and $\frac{1}{4}\le r\le \frac{1}{2}$ we have
$$
|w_\sigma-w_\sigma(1)|=\left|w_\sigma-\log\bigg(\frac{2\sigma}{1+\sigma^2}\bigg)\right|\leq C\left|\log\bigg(\frac{1+\sigma^2}{1+(\sigma/4)^2}\bigg)\right|=O(1),$$
as $\sigma\rightarrow\infty$, and for $j=1$
$$|\de_r(w_\sigma-w_\sigma(1))|=|w'_\sigma|=\left|\frac{2\sigma^2r}{1+\sigma^2r^2}\right|=O(r^{-1})\quad  \text{as } \sigma\rightarrow\infty.$$
For $ j\geq 2$, observe that $\de_r^j w_\sigma=\sigma^j\frac{P_j(\sigma r)}{Q_j(\sigma r)}$ for some polynomials $P_j$ and $Q_j$. In fact we have
$$
\frac{d}{dr}\frac{P_j(\sigma r)}{Q_j(\sigma r)}=\sigma\frac{P_j'(\sigma r)Q_j(\sigma r)-Q_j'(\sigma r)P_j(\sigma r)}{Q_j^2(\sigma r)}=:\frac{P_{j+1}(\sigma r)}{Q_{j+1}(\sigma r)}.
$$
Then clearly
$$\bigg|\frac{P_{j+1}(\sigma r)}{Q_{j+1}(\sigma r)} \bigg| \leq \frac{C}{r}\bigg|\frac{P_j(\sigma r)}{Q_j(\sigma r)}\bigg|\quad \text{as } \sigma\to\infty,$$
and \eqref{Wj} follows by induction.

Since in the sum in \eqref{lapv} there is no term with more than $m-1$ derivatives of $w_\sigma$, and by the bounds on $\varphi$, we then have for $\sigma$ large

\begin{equation}\label{loworder}
\begin{split}
\int_{B_1}|\Delta^{\frac{m}{2}} v_\sigma-\varphi\Delta^{\frac{m}{2}} w_\sigma|^2dx&\leq C\sum_{0\leq j+\ell\leq m+1}\int_{B_{1/2}\setminus B_{1/4}}\frac{|\de_r^j (w_\sigma-w_\sigma(1))|^2}{r^{2\ell}}dx\\
& \le C \sum_{0\leq j+\ell\leq m+1}\int_{B_{1/2}\setminus B_{1/4}} r^{-2j-2\ell} dr \\
& \le C\int_{\frac{1}{4}}^{\frac{1}{2}}rdr=C.
\end{split}
\end{equation}
Also $\sigma^{-m}\Delta^{\frac{m}{2}} w_\sigma$ is the quotient of two polynomials in $\sigma r$. In fact 
\begin{equation}\label{highorder}
|\Delta^{\frac{m}{2}} w_\sigma|=2^m(m-1)!\sigma^m\frac{\sigma^{m}r^m+p(\sigma r)}{(1+\sigma^2r^2)^m},
\end{equation}
where $\deg p\leq m-1$.
Then \eqref{loworder}, \eqref{highorder} and the change of variable $s=1+\sigma^2r^2$ yield
\begin{equation}\label{allorders}
\begin{split}
 \int_{B_1}|\Delta^{\frac{m}{2}} v_\sigma|^2dx&=\omega_{2m-1}(2^m(m-1)!)^2\int_0^1\frac{\sigma^{4m}r^{4m-1}}{(1+\sigma^2r^2)^{2m}}dr+O(1)\\
&= 2\Lambda_1\int_1^{1+\sigma^2}\frac{(s-1)^{2m-1}}{2s^{2m}}ds+O(1)\\
&= 2\Lambda_1\log\sigma +O(1),
\end{split}
\end{equation}
with error $|O(1)|\le C$ as $\sigma\to \infty$.
%Fix now a small constant $\ve>0$. Then there exists $\delta>0$ such that if $h$ is a metric satisfying $||g-h||_{C^m(B_1)}<\delta$, then

Fix now $p\in M$ and take $\alpha>0$ smaller than the injectivity radius of $(M,g)$. Consider the map $f_\alpha:B_1\rightarrow M$ given by $f_\alpha(x):=\exp_p(\alpha x)$, where $\exp_p$ is the exponential map at $p$.
% We take now the coordinates $\{x^i\}$ centered at $p$ on $K_\alpha:=\exp_p(B_\alpha)$, obtained from the canonical coordinates on $\R{n}$ through the map $\exp_p:\R{2m}\simeq T_pM\to M$.
Then we define
$$
\tilde v_{\sigma,\alpha}:=\left\{\begin{array}{ll}
                         v_\sigma\circ f_\alpha^{-1}&\text{on }K_\alpha:=f_\alpha(B_1)\\
                         \log\frac{2\sigma}{1+\sigma^2}&\text{on }M\setminus K_\alpha,
                       \end{array}\right.
$$
and
$$
u_{\sigma,\alpha}:=\tilde v_{\sigma,\alpha} - \int_M \tilde v_{\sigma,\alpha} d\mu_g\in E.
$$
We also consider the metric $h_\alpha:=\alpha^{-2}f^*_\alpha g$ on $B_1$.
%Let $g_{ij}(x)$ the components of $g$ in the coordinate system $\{x^i\}$. Then in the coordinate system $y^i=\alpha^{-1} x^i$ on $B_1$ we have $h_{ij}(y)=g_{ij}(\alpha y)$ and for $\alpha$ small enough there holds $\|h-I\|_{C^m}<\delta$, since we assume that the metric $g$ is smooth and since $g_{ij}(0)=\delta_{ij}$.
We claim that
\begin{equation}\label{eqC}
 \int_M|\Delta^{\frac{m}{2}}_g u_{\sigma,\alpha}|_g^2d\mu_g= \int_{K_\alpha}|\Delta^{\frac{m}{2}}_g( v_\sigma\circ f^{-1}_\alpha)|_g^2d\mu_g=\int_{B_1}|\Delta^{\frac{m}{2}}_{h_\alpha} v_{\sigma}|_{h_\alpha}^2d\mu_{h_\alpha}.
\end{equation}
The first identity in \eqref{eqC} is clear. In order to prove the second one, consider first the case when $m$ is even. Then, writing
\begin{equation*}
\begin{split}
h_{\alpha,ij}&:=h_\alpha \Big(\frac{\de}{\de x^i}, \frac{\de}{\de x^j}\Big)=\alpha^{-2}g\Big(\frac{\de f_\alpha}{\de x^i}, \frac{\de f_\alpha}{\de x^j}\Big)=:\alpha^{-2}g_{ij}\\
\sqrt{h_\alpha}&:=\sqrt{\det(h_{\alpha,ij})}=\alpha^{-2m}\sqrt{\det (g_{ij})}=:\alpha^{-2m}\sqrt{g},
\end{split}
\end{equation*}
and using the summation convention, we compute
\begin{equation*}
\begin{split}
\int_{K_\alpha}&\big(\Delta^{\frac{m}{2}}_g (v_\sigma\circ f_\alpha^{-1})\big)^2d\mu_g\\
&=\int_{B_1}\bigg\{\bigg(\frac{1}{\sqrt{g}} \frac{\de}{\de x^i}\Big(\sqrt{g}g^{ij}\frac{\de}{\de x^j}\Big)  \bigg)^{m/2}v_\sigma\circ f_\alpha^{-1}(f_\alpha(x))  \bigg\}^2\sqrt{g}dx\\
&= \int_{B_1}\bigg\{\bigg(\frac{\alpha^{-2}}{\sqrt{h_\alpha}} \frac{\de}{\de x^i}\Big(\sqrt{h_\alpha}h_\alpha^{ij}\frac{\de}{\de x^j}\Big)  \bigg)^{m/2}v_\sigma(x) \bigg\}^2\alpha^{2m}\sqrt{h_\alpha}dx\\
&=\int_{B_1}(\Delta_{h_\alpha}^\frac{m}{2} v_\sigma)^2 d\mu_{h_\alpha}.
\end{split}
\end{equation*}
This proves \eqref{eqC} for $m$ even. When $m$ is odd the argument is similar, additionally using the formula
$$\int_{K_\alpha}|\nabla \psi|_g^2d\mu_g=\int_{B_1}g^{ij}\frac{\de}{\de x^i}\psi(f_\alpha(x)) \frac{\de}{\de x^j}\psi(f_\alpha(x))\sqrt{g}dx$$
for any $\psi\in C^\infty(K_\alpha)$.
Since the metric $g$ is smooth, we have that $h_\alpha\to |dx|^2$ as $\alpha\to0$ in $C^\ell(B_1)$ for every $\ell$, where $|dx|^2$ denotes the Euclidean metric. In particular, using \eqref{allorders}, we see that there exists a function $\ve=\ve(\alpha)$ defined for $\alpha$ small with $\lim_{\alpha \to 0^+}\ve (\alpha)=0$ such that
\begin{equation}\label{perturb}
(1-\ve(\alpha))2\Lambda_1\log\sigma -C\leq\int_{B_1}|\Delta^{\frac{m}{2}}_{h_{\alpha}} v_\sigma|_{h_{\alpha}}^2d\mu_{h_{\alpha}}\leq (1+\ve(\alpha))2\Lambda_1\log\sigma+C.
\end{equation}
For each $\sigma>1$ choose $\alpha=\alpha(\sigma)$ such that
\begin{equation}\label{as}
\lim_{\sigma\to\infty}\alpha(\sigma)=0,\quad \lim_{\sigma\to\infty}\sigma \alpha(\sigma)=\infty.
\end{equation}
Then setting $u_\sigma:=u_{\sigma,\alpha(\sigma)}$ and taking into account \eqref{eqC}, \eqref{perturb} and \eqref{as}, we infer \eqref{eqB}.

\medskip

\noindent \emph{Step 2: Proof of \eqref{eqA}}. It remains to estimate 
$$\frac{1}{2m}\log\left(\int_M e^{2mu_\sigma}d\mu_g\right)=\frac{1}{2m}\log\left(\int_M e^{2m\tilde v_{\sigma,\alpha}}d\mu_g\right)-\int_M\tilde v_{\sigma,\alpha} d\mu_g=:I-II.
$$
We claim that
\begin{eqnarray}
I&=&\log\alpha+O(1),\label{stima2pezzo1} \\
II&=& -(1+o(1))\log\sigma,\label{stima2pezzo2}
\end{eqnarray}
with errors $|O(1)|\leq C$ and $o(1)\to 0$ as $\sigma\to\infty$.
As for \eqref{stima2pezzo2} we have
\begin{equation*}
II=\int_{K_\alpha}v_\sigma\circ f^{-1}_\alpha d\mu_g+\int_{M\setminus K_\alpha} \log\bigg(\frac{2\sigma}{1+\sigma^2}\bigg)d\mu_g=:III+IV
\end{equation*}
%&=\int_{K_\alpha}\varphi(f_\alpha^{-1}(z))\log\bigg(\frac{1+\sigma^2}{1+\sigma^2|f_\alpha^{-1}(z)|^2}\bigg)d\mu_g(z)\\
%&\quad +\int_{K_\alpha}\log\bigg(\frac{2\sigma}{1+\sigma^2}\bigg)d\mu_g(z) +\int_{M\setminus %K_\alpha}\log\bigg(\frac{2\sigma}{1+\sigma^2}\bigg)d\mu_g(z)\\
%&=\alpha^{-2m}\int_{B_1}\varphi(y)\log\bigg(\frac{1+\sigma^2}{1+\sigma^2|y|^2}\bigg)\sqrt{|g(y)|}dx+\log\bigg(\frac{2\sigma}{1+\sigma^2}\bigg)\\
%&=-\log\sigma +O(1)
Since $\vol (M\backslash K_\alpha)\to 1$ as $\sigma\to \infty$, we have
$$IV=-(1+o(1))\log \sigma,$$
with error $o(1)\to 0$ as $\sigma\to\infty$.
Defining $h_\alpha$, $\sqrt{g}$ and $\sqrt{h_\alpha}$  as above, with $\alpha=\alpha(\sigma)$, using that $h_{\alpha(\sigma)}\to |dx|^2$ as $\sigma\to\infty$ in $C^\ell(B_1)$ for every $\ell\geq 0$, \eqref{vs} and \eqref{as}, we also get
\begin{equation*}
\begin{split}
III=\int_{B_1}v_\sigma \sqrt{g}dx&=\alpha^{2m}\int_{B_1}v_\sigma\sqrt{h_\alpha}dx=\alpha^{2m}(1+o(1))\int_{B_1}v_\sigma dx\\
&=\alpha^{2m}(1+o(1))(-\log\sigma)=o(1)\log\sigma,
\end{split}
\end{equation*}
with error $o(1)\to 0$ as $\sigma\to\infty$.
Therefore \eqref{stima2pezzo2} is proved.

We shall now prove \eqref{stima2pezzo1}. We have
$$
A:=\int_M e^{2m\tilde v_{\sigma,\alpha}}d\mu_g= \int_{M\setminus K_\alpha}\left(\frac{2\sigma}{1+\sigma^2}\right)^{2m}d\mu_g+\int_{K_\alpha}e^{2m\tilde v_{\sigma,\alpha}}d\mu_g=:A_1+A_2.
$$
We clearly have $A_1\to 0$ as $\sigma\to\infty$, and
$$A_2=\alpha^{2m}\int_{B_1}e^{2mv_\sigma} d\mu_{h_\alpha}=\alpha^{2m}(1+o(1))\int_{B_1}e^{2mv_\sigma}dx.$$
Therefore
$$I=\frac{1}{2m}\log A=\log\alpha+\frac{1}{2m}\log\bigg(\int_{B_1}e^{2mv_\sigma}dx\bigg)+o(1),$$
with error $o(1)\to 0$ as $\sigma\to\infty$ and we complete the proof of \eqref{stima2pezzo1} by showing that
\begin{equation}\label{ultima}
\frac{1}{C}\leq \int_{B_1}e^{2mv_\sigma}dx\leq C.
\end{equation}
Observe that for $|x|\le 1$ we have
\begin{equation*}
\begin{split}
\chi_{B_1\setminus B_\frac{1}{4}}\log\bigg(\frac{2\sigma}{1+\sigma^2}\bigg)+&\chi_{B_{\frac{1}{4}}}(x)\log\bigg(\frac{2\sigma}{1+\sigma^2|x|^2}\bigg)\\
&\leq v_\sigma(x) 
\leq \log\bigg(\frac{2\sigma}{1+\sigma^2|x|^2}\bigg),
\end{split}
\end{equation*}
hence
$$ \int_{B_{1/4}}\bigg(\frac{2\sigma}{1+\sigma^2|x|^2}\bigg)^{2m}dx \leq \int_{B_1}e^{2mv_\sigma}dx\leq \int_{B_1}\bigg(\frac{2\sigma}{1+\sigma^2|x|^2}\bigg)^{2m}dx.$$
Now \eqref{ultima} follows observing that for any fixed $R>0$ one has
$$\int_{B_R}\bigg(\frac{2\sigma}{1+\sigma^2|x|^2}\bigg)^{2m}dx=\int_{B_{\sigma R}}\bigg(\frac{2}{1+|y|^2}\bigg)^{2m}dy=C_0+o(1),$$
with error $o(1)\to 0$ as $\sigma\to\infty$, where $C_0=\int_{\R{2m}}\big(2/(1+|y|^2)\big)^{2m}dy<\infty$.

Together with Step 1, we have shown that
$$I_\lambda(u_\sigma)=(\Lambda_1-\lambda+o(1))\log\sigma -\lambda\log(\alpha),\quad \text{as }\quad \sigma\to\infty.$$
Observing that \eqref{as} implies $\log\alpha=o(1)\log\sigma$ as $\sigma\to\infty$, we infer \eqref{eqA}.
\end{proof}

\section{Existence for almost every $\lambda\in ]\Lambda_1,\lambda_1/2m[$} \label{sec3}

Fix $\lambda\in]\Lambda_1,\lambda_1/2m[$. By Lemma \ref{usigma}, there exists $\sigma=\sigma(\lambda)>0$ such that for $u_0:=u_\sigma$ we have
$$I(u_0)<0\quad \textrm{and} \quad \|u_0\|\geq 1.$$
Consider the set of paths
$$P:=\{\gamma\in C^0([0,1];E): \gamma(0)=0, \gamma(1)=u_0, \gamma(t)\in C^\infty(M) \text{ for }0\le t\le 1\},$$
which is clearly non-empty since $u_0\in E\cap C^\infty(M)$, and for $\mu\in]\lambda,\lambda_1/2m[$ set
$$c_\mu:=\inf_{\gamma\in P} \max_{t\in[0,1]} I_\mu(\gamma(t)).$$
Since by Jensen's inequality $\log(\int_Me^{2mu}d\mu_g)>0$, the function $\mu\mapsto c_\mu$ is non-increasing, hence differentiable for almost every $\mu\in]\lambda,\lambda_1/2m[$. Then we will show that for any $\mu$ such that $c'_\mu:=dc_\mu/d\mu$ exists, the functional $I_\mu$ admits a converging Palais-Smale sequence at level $c_\mu$.

\begin{lemma}\label{derivI} \begin{enumerate}\item For any $u,v\in E$, $\mu\geq 0$ there holds
$$I_\mu(u+v)\leq I_\mu(u)+\langle I'_\mu(u),v\rangle+\frac{1}{2}\|v\|^2,$$
where
$$\langle I'_\mu(u),v\rangle:=\frac{d}{dt}I_\mu(u+tv)\Big|_{t=0}.$$
\item For any $C_1\geq 0$ there exists a constant $\tilde C_1$ such that for any $\mu,\nu\in\R{}$ there holds
$$\|I'_\mu(u)-I'_\nu(u)\|\leq \tilde C_1|\mu-\nu|,$$
uniformly in $u\in E$ with $\|u\|^2\leq C_1$, where
$$\|I'_\mu(u)\|:=\sup_{\|v\|\le 1}\langle I'_\mu(u),v\rangle.$$
\end{enumerate}
\end{lemma}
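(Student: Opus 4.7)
The two parts are essentially independent. Part (1) is an upper bound on $I_\mu(u+v)$; since the norm term expands exactly as $\frac12\|u\|^2+\langle u,v\rangle+\frac12\|v\|^2$, the only nontrivial piece is an upper bound on $-\log\int_M e^{2m(u+v)}d\mu_g$ in terms of its value at $v=0$ and a linear correction. This is precisely the convexity of $v\mapsto \log\int_M e^{2m(u+v)}d\mu_g$, and the cleanest way to see it is Jensen's inequality applied to the probability measure
\[
 d\nu_u:=\frac{e^{2mu}}{\int_M e^{2mu}d\mu_g}\,d\mu_g,
\]
which gives $\int_M e^{2mv}d\nu_u\geq \exp\bigl(2m\int_M v\,d\nu_u\bigr)$. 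After taking logs and multiplying by $-\mu/(2m)$, one recognizes the linear correction as exactly $-\mu\int_M v\,d\nu_u$. A direct differentiation shows $\langle I'_\mu(u),v\rangle=\langle u,v\rangle-\mu\int_M v\,d\nu_u$, so assembling the pieces yields (1).

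For part (2), observe that the quadratic term of $I_\mu$ does not depend on $\mu$, hence
\[
 \langle I'_\mu(u)-I'_\nu(u),v\rangle=-(\mu-\nu)\int_M v\,d\nu_u,
\]
and the claim reduces to a uniform bound on $\sup_{\|v\|\leq 1}\bigl|\int_M v\,d\nu_u\bigr|$ over $\{u\in E:\|u\|^2\leq C_1\}$. The plan is to apply Cauchy--Schwarz,
\[
 \Bigl|\int_M v\,d\nu_u\Bigr|\leq\Bigl(\int_M v^2\,d\mu_g\Bigr)^{1/2}\Bigl(\int_M e^{4mu}\,d\mu_g\Bigr)^{1/2}\Bigl(\int_M e^{2mu}\,d\mu_g\Bigr)^{-1},
\]
and bound each factor separately. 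The $v^2$ factor is controlled by $C_0^m\|v\|^2\leq C_0^m$ via the Poincaré inequality \eqref{poinc}. The denominator is bounded below by $1$ by Jensen's inequality, since $\int_M u\,d\mu_g=0$ and $\vol(M)=1$. For the middle factor I would use the Young-type splitting
\[
 4mu\leq m\Lambda_1\frac{u^2}{\|u\|^2}+\frac{4m\|u\|^2}{\Lambda_1},
\]
combined with the Adams--Moser--Trudinger inequality \eqref{ams} to obtain $\int_M e^{4mu}d\mu_g\leq C\exp(4mC_1/\Lambda_1)$ uniformly on $\{\|u\|^2\leq C_1\}$. Combining these three bounds gives the desired $\tilde C_1$.

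There is no serious obstacle: part (1) is essentially Jensen, and part (2) is a packaging of Adams--Moser--Trudinger plus Poincaré. The only point requiring minor care is the Young splitting, where the constants must be chosen so that the coefficient of $u^2/\|u\|^2$ is exactly $m\Lambda_1$ in order to apply \eqref{ams}; this forces the prefactor $4m/\Lambda_1$ in the exponential of $\|u\|^2$, which is finite because we restrict to $\|u\|^2\leq C_1$.
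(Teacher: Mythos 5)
Your proposal is correct and follows essentially the same route as the paper: part (2) is verbatim the paper's argument (Cauchy--Schwarz, Poincar\'e, $\int_M e^{2mu}d\mu_g\geq 1$ by Jensen, and Fontana's inequality with the Young splitting). In part (1) you invoke Jensen's inequality for the probability measure $d\nu_u$ to get the convexity inequality at the endpoints directly, whereas the paper instead writes the excess as $-\frac{\mu}{2m}\int_0^1\int_0^t f''(s)\,ds\,dt$ with $f(s)=\log\bigl(\int_M e^{2m(u+sv)}d\mu_g\big/\int_M e^{2mu}d\mu_g\bigr)$ and shows $f''\geq 0$ by H\"older; these are two standard ways of expressing the same log-convexity, so the proofs are in substance identical.
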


\begin{proof} \emph{1.} We have
\begin{multline*}
I_\mu(u+v)-I_\mu(u)-\langle I'_\mu(u),v\rangle -\frac{1}{2}\|v\|^2\\
=-\frac{\mu}{2m}\log\bigg(\frac{\int_Me^{2m(u+v)}d\mu_g}{\int_Me^{2mu}d\mu_g}\bigg)+\mu\frac{\int_Mve^{2mv}d\mu_g}{\int_Me^{2mu}d\mu_g}=-\frac{\mu}{2m}\int_0^1\int_0^{t}f''(s)dsdt,
\end{multline*}
where $f(s)=\log\big(\int_Me^{2m(u+sv)}d\mu_g\big/\int_Me^{2mu}d\mu_g\big)$.
By H\"older's inequality
\begin{eqnarray*}
f''(s)&=&\bigg[4m^2\int_M v^2 e^{2m(u+sv)}d\mu_g \int_M e^{2m(u+sv)} d\mu_g\\
&&-\bigg( 2m \int_M ve^{2m(u+sv)} d\mu_g \bigg) ^2 \bigg] \times \bigg(\int_Me^{2m(u+sv)}d\mu_g\bigg)^{-2}\\
&\geq& 0.
\end{eqnarray*}

\noindent\emph{2.} Take $u,v\in E$ with $\|v\|\leq 1$. Recalling that $\int_Me^{2mu}\geq 1$ and using \eqref{poinc}, we get
\begin{eqnarray*}
\langle I'_\mu(u),v\rangle-\langle I'_\nu(u),v\rangle&=&(\nu-\mu)\frac{\int_Mve^{2mu}d\mu_g}{\int_Me^{2mu}d\mu_g}\\
&\leq&|\mu-\nu|\bigg(\int_Me^{4mu}d\mu_g \int_Mv^2d\mu_g\bigg)^\frac{1}{2}\\
&\leq&C_P^{\frac{m}{2}}|\mu-\nu|\bigg(\int_Me^{4mu}d\mu_g\bigg)^\frac{1}{2}.
\end{eqnarray*}
Applying Fontana's inequality \eqref{ams} together with $4mu\leq m\Lambda_1\frac{u^2}{\|u\|^2}+\frac{4m}{\Lambda_1}\|u\|^2$, and recalling that $\|u\|\leq C_1$
we find
$$\bigg(\int_Me^{4mu}d\mu_g\bigg)^\frac{1}{2}\leq C\bigg(\int_Me^{m\Lambda_1\frac{u^2}{\|u\|^2}}d\mu_g\bigg)^\frac{1}{2}\leq C,$$
and we conclude.
\end{proof}

\begin{lemma}\label{PS}
 Fix $\mu\in ]\lambda,\lambda_1/(2m)[$ such that the derivative $c'_\mu$ exists. Then there exists a sequence $(u_n)\subset E\cap C^\infty(M)$ such that $\|u_n\|^2\leq C$, $I_\mu(u_n)\to c_\mu$ and $I'_\mu(u_n)\rightarrow 0$.
\end{lemma}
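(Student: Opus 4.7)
The plan is to apply Struwe's monotonicity trick. First observe that by Jensen's inequality $\log\int_M e^{2mu}d\mu_g\geq 0$ for every $u\in E$, so $\mu\mapsto c_\mu$ is non-increasing. Since $c'_\mu$ exists by hypothesis, there is a constant $C_0>0$ such that $c_{\mu_n}-c_\mu\leq C_0(\mu-\mu_n)$ for any sequence $\mu_n\uparrow\mu$ with $\mu_n\in\,]\lambda,\mu[$ and $n$ large. By definition of the infimum $c_{\mu_n}$ I then pick paths $\gamma_n\in P$ with
$$\max_{t\in[0,1]}I_{\mu_n}(\gamma_n(t))\leq c_{\mu_n}+(\mu-\mu_n)\leq c_\mu+(C_0+1)(\mu-\mu_n).$$

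Next I extract $u_n$ and bound its norm. The identity
$$I_{\mu_n}(u)-I_\mu(u)=\frac{\mu-\mu_n}{2m}\log\int_M e^{2mu}d\mu_g\geq 0$$
gives $I_\mu\leq I_{\mu_n}$ pointwise on $E$; since $\max_t I_\mu(\gamma_n(t))\geq c_\mu$ by definition of $c_\mu$, continuity lets me choose $t_n\in[0,1]$ with $I_\mu(\gamma_n(t_n))\geq c_\mu-(\mu-\mu_n)$ and set $u_n:=\gamma_n(t_n)\in E\cap C^\infty(M)$. Sandwiching,
$$c_\mu-(\mu-\mu_n)\leq I_\mu(u_n)\leq I_{\mu_n}(u_n)\leq c_\mu+(C_0+1)(\mu-\mu_n),$$
so $I_\mu(u_n)\to c_\mu$, and plugging back into the identity above yields $\log\int_M e^{2mu_n}d\mu_g\leq 2m(C_0+2)$. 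Combined with $I_\mu(u_n)=\tfrac{1}{2}\|u_n\|^2-\tfrac{\mu}{2m}\log\int_M e^{2mu_n}d\mu_g$ this gives $\|u_n\|^2\leq C$, completing the boundedness part.

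The main obstacle is showing $I'_\mu(u_n)\to 0$; this is the heart of Struwe's argument, and I would proceed by contradiction via a deformation. If $\|I'_\mu(u_n)\|\geq\delta>0$ along a subsequence, pick $\xi_n\in E\cap C^\infty(M)$ with $\|\xi_n\|\leq 1$ and $\langle I'_\mu(u_n),\xi_n\rangle\geq\delta/2$. Lemma \ref{derivI}(1) applied to $v=-(\delta/2)\xi_n$ gives $I_\mu(u_n-(\delta/2)\xi_n)\leq I_\mu(u_n)-\delta^2/8$, and Lemma \ref{derivI}(2) together with the already established bound on $\|u_n\|$ transfers the same decrease to $I_{\mu_n}$ up to an error $O(\mu-\mu_n)$. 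I would then deform $\gamma_n$ by $\tilde\gamma_n(t):=\gamma_n(t)-(\delta/2)\phi(t)\xi_n$, where $\phi$ is a cut-off supported in a small interval around $t_n$ with $\phi(t_n)=1$, so that $\tilde\gamma_n\in P$ and $\max_t I_{\mu_n}(\tilde\gamma_n(t))<c_{\mu_n}$ for $n$ large, contradicting the definition of $c_{\mu_n}$. The delicate point is choosing $\mathrm{supp}(\phi)$ small enough, using uniform continuity of $\gamma_n$ in $E$ and of $I_{\mu_n}$ on bounded subsets of $E$, so that the deformation does not raise $I_{\mu_n}$ above $\max_t I_{\mu_n}(\gamma_n(t))-\delta^2/16$ elsewhere along the path.
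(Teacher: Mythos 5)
The first part of your argument — extracting $u_n=\gamma_n(t_n)$ with $\|u_n\|^2\leq C$ and $I_\mu(u_n)\to c_\mu$ from near-optimal paths and the differentiability of $\mu\mapsto c_\mu$ — is essentially the same as the paper's (you take $\mu_n\uparrow\mu$ where the paper takes $\mu_n\downarrow\mu$, but the estimates work symmetrically). The gap is in the deformation step.

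Your deformation $\tilde\gamma_n(t)=\gamma_n(t)-(\delta/2)\phi(t)\xi_n$ uses a fixed direction $\xi_n$ (chosen to be a descent direction for $I_\mu$ at the single point $u_n=\gamma_n(t_n)$) and a cut-off $\phi$ supported in a small $t$-interval around $t_n$. This does not produce the desired contradiction: the set of near-maximum points $\{t:\, I_{\mu_n}(\gamma_n(t))\geq c_{\mu_n}-(\mu-\mu_n)\}$ need not be contained in any small neighborhood of $t_n$. At any near-maximum point $t'$ outside $\mathrm{supp}\,\phi$ the path is unchanged, so $I_{\mu_n}(\tilde\gamma_n(t'))=I_{\mu_n}(\gamma_n(t'))$ may still be as large as $c_{\mu_n}+(\mu-\mu_n)$, and $\max_t I_{\mu_n}(\tilde\gamma_n(t))$ is not reduced below $c_{\mu_n}$. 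Note also that even at near-maximum points inside $\mathrm{supp}\,\phi$, the direction $\xi_n$ is a descent direction only at $u_n$; since the gradient can rotate along the path, pushing $\gamma_n(t')$ along $-\xi_n$ could increase $I_{\mu_n}$. Shrinking $\mathrm{supp}\,\phi$ (your ``delicate point'') only makes the first problem worse, not better: the issue is not that the deformation might raise the value elsewhere, but that it leaves the value at other near-max points entirely untouched.

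The paper resolves this by deforming every point of the path in its own descent direction,
$$\tilde\gamma_n(t)=\gamma_n(t)-\sqrt{\mu_n-\mu}\;\varphi_n(\gamma_n(t))\,\frac{I'_\mu(\gamma_n(t))}{\|I'_\mu(\gamma_n(t))\|},$$
where the cut-off $\varphi_n$ is a function of the energy level $I_{\mu_n}(\gamma_n(t))$, not of $t$. This is what lets the estimate $I_{\mu_n}(\tilde\gamma_n(t))\le I_{\mu_n}(\gamma_n(t))-\frac{\delta}{4}\sqrt{\mu_n-\mu}$ hold simultaneously at all near-maximum $t$, while points of lower energy are unchanged or also decreased. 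To make this work, the paper also phrases the contradiction hypothesis as: for every $C_0$ there is $\delta(C_0)>0$ with $\|u\|^2\le C_0$, $|I_\mu(u)-c_\mu|<2\delta \Rightarrow \|I'_\mu(u)\|\ge 2\delta$. Combined with the uniform norm bound $\|\gamma_n(t)\|^2\le C_1$ valid at all near-maximum $t$ (not just at $t_n$), this gives a gradient lower bound $\|I'_\mu(\gamma_n(t))\|\geq 2\delta$ on the whole near-maximum set, which your fixed-direction construction cannot reproduce. Without this level-set cut-off and pointwise gradient direction, the deformation argument does not close.
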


\begin{proof}
Suppose that the lemma is false. Then for each $C_0>0$ there exists $\delta(C_0)>0$ for which $\|u\|^2\leq C_0$ and $|I_\mu(u)-c_\mu|<2\delta$ imply $\|I'_\mu(u)\|\geq 2\delta$. We set $\alpha:=-c'_\mu+3\geq 3$, we consider a decreasing sequence $\mu_n\rightarrow \mu$ and a sequence of paths $\gamma_n\in P$ such that 
\begin{equation}\label{eq5}
\max_{0\leq t\leq 1} I_\mu(\gamma_n(t))\leq c_\mu+(\mu_n-\mu).
\end{equation}
Take $v_n=\gamma_n(t_n)$ such that 
\begin{equation}\label{imun}
 I_{\mu_n}(v_n)\geq c_{\mu_n}-2(\mu_n-\mu).
\end{equation}
Then for $n$ sufficiently large
\begin{equation}\label{eq6}
\begin{split}
c_\mu-\alpha(\mu_n-\mu)&\leq c_{\mu_n}-2(\mu_n-\mu)\leq I_{\mu_n}(v_n)\leq I_\mu(v_n)\\
&\leq\max_{t\in [0,1]} I_\mu(\gamma_n(t)) \leq c_\mu+(\mu_n-\mu).
\end{split}
\end{equation}
In particular
$$I_\mu(v_n)-I_{\mu_n}(v_n)\leq c_\mu+(\mu_n-\mu)-(c_\mu-\alpha(\mu_n-\mu))=(\alpha+1)(\mu_n-\mu), $$
so that 
$$\frac{I_{\mu}(v_n)-I_{\mu_n}(v_n)}{\mu_n-\mu}=\frac{1}{2m}\log\left(\int_Me^{2mv_n}d\mu_g\right)\leq\alpha+1.$$
This and \eqref{eq5} yield
\begin{equation}\label{eq7} 
\|v_n\|^2=2I_\mu(v_n)+\frac{\mu}{m}\log\left(\int_Me^{2mv_n}d\mu_g\right)\leq C(\mu)=:C_1.
\end{equation}
By assumption we can now choose $\delta=\delta(C_1)$ so that for $n$ sufficiently large if $|I_\mu(v_n)-c_\mu|<2\delta$, then $\|I'_\mu(v_n)\|\geq 2\delta$. By Lemma \ref{derivI} we get
\begin{eqnarray}\label{scalarpr}
\begin{split}
\langle I'_{\mu_n}(v_n),I'_\mu(v_n) \rangle&=\|I'_\mu(v_n)\|^2-\langle I'_\mu(v_n)-I'_{\mu_n}(v_n), I'_\mu(v_n)\rangle\\
&\geq \frac{1}{2}\|I'_\mu(v_n)\|^2-\frac{1}{2}\|I'_\mu(v_n)-I'_{\mu_n}(v_n) \|^2\\
&\geq \frac{1}{2}\|I'_\mu(v_n)\|^2 -\tilde C_1|\mu-\mu_n|^2\\
&\geq \frac{1}{4}\|I'_\mu(v_n)\|^2 \geq\delta^2,
\end{split}
\end{eqnarray}
for $n$ sufficiently large.
Now choose $\varphi\in C^\infty(\R{})$ such that $0\leq \varphi\leq 1$ with $\varphi\equiv 1$ on $[-1,\infty)$ and $\varphi\equiv 0$ on $(-\infty,-2]$. For $n\in\mathbb{N}$ and $u\in E$ set
$$\varphi_n(u):=\varphi\bigg(\frac{I_{\mu_n}(u)-c_\mu}{\mu_n-\mu}\bigg).$$
With $\gamma_n\in P$ and $v_n=\gamma_n(t_n)$ as above we set
$$\tilde \gamma_n(t):= \gamma_n(t)-\sqrt{\mu_n-\mu}\;\varphi_n(\gamma_n(t))\frac{I'_\mu(\gamma_n(t))}{\|I'_\mu(\gamma_n(t))\|}\in E\cap C^\infty(M), $$
and $\tilde v_n=\tilde \gamma_n(t_n)$. Then we get from Lemma \ref{derivI} and \eqref{scalarpr}
\begin{equation}\label{imutilde}
\begin{split}
I_{\mu_n}(\tilde v_n)&=I_{\mu_n}\bigg(v_n-\sqrt{\mu_n-\mu}\;\varphi_n(v_n)\frac{I_\mu'(v_n)}{\| I_\mu'(v_n)\|}\bigg) \\
&\leq I_{\mu_n}(v_n)-\frac{\sqrt{\mu_n-\mu}\;\varphi_n(v_n)}{\|I_\mu'(v_n)\|}\langle I_{\mu_n}'(v_n),I_{\mu}'(v_n)\rangle +\frac{1}{2}(\mu_n-\mu)\varphi_n^2(v_n)\\
&\leq I_{\mu_n}(v_n)-\frac{1}{4}\sqrt{\mu_n-\mu}\;\varphi_n(v_n)\|I'_\mu(v_n)\|+\frac{1}{2}(\mu_n-\mu)\varphi^2_n(v_n)\\
&\leq I_{\mu_n}(v_n)-\frac{\delta}{4}\sqrt{\mu_n-\mu}\;\varphi_n(v_n)\leq I_{\mu_n}(v_n),
\end{split}
\end{equation}
for $n$ large enough.
Now we claim that for $n$ large enough
\begin{equation}\label{cmu}
c_{\mu_n}\leq \max_{0\leq t\leq 1}I_{\mu_n}(\tilde \gamma_n(t))=\max_{\{t\in[0,1]: I_{\mu_n}(\gamma_n(t))\geq c_{\mu_n}-(\mu_n-\mu)\}} I_{\mu_n}(\tilde \gamma_n(t)).
\end{equation}
The inequality is clear. As for the identity, observe that if $t\in [0,1]$ is such that $I_{\mu_n}(\gamma_n(t))\le c_{\mu_n}-2(\mu_n-\mu)$, then $\tilde \gamma_n(t)=\gamma_n(t)$, hence 
$$I_{\mu_n}(\tilde \gamma_n(t))=I_{\mu_n}(\gamma_n(t))<c_{\mu_n}.$$
If $t\in [0,1]$ is such that
$$I_{\mu_n}(\gamma_n(t))\in ]c_{\mu_n}-2(\mu_n-\mu),c_{\mu_n}-(\mu_n-\mu)[,$$
then \eqref{imun} holds for $v_n=\gamma_n(t)$ and we can apply \eqref{imutilde} with $\tilde v_n=\tilde \gamma_n(t)$ and infer
$$I_{\mu_n}(\tilde \gamma_n(t))\leq I_{\mu_n}(\gamma_n(t))<c_{\mu_n}.$$
Then \eqref{cmu} is proven and, since for $t$ such that $I_{\mu_n}(\gamma_n(t))\geq c_{\mu_n}-(\mu_n-\mu)$ we have that \eqref{imutilde} holds for $v_n=\gamma_n(t)$ and $\tilde v_n=\tilde \gamma_n(t)$ with $\varphi(\gamma_n(t))=1$, recalling \eqref{eq5} and \eqref{eq6}, we infer
\begin{eqnarray*}
c_{\mu_n}&\leq&\max_{\{t\in[0,1]: I_{\mu_n}(\gamma_n(t))\geq c_{\mu_n}-(\mu_n-\mu)\}} I_{\mu_n}(\tilde \gamma_n(t))\\
&\leq&\max_{0\leq t\leq 1} I_{\mu_n}(\gamma_n(t))-\frac{\delta}{4}\sqrt{\mu_n-\mu}\leq \max_{0\leq t\leq 1} I_{\mu}(\gamma_n(t))-\frac{\delta}{4}\sqrt{\mu_n-\mu}\\
&\leq& c_\mu +(\mu_n-\mu)-\frac{\delta}{4}\sqrt{\mu_n-\mu}\leq c_{\mu_n}+(\alpha-1)(\mu_n-\mu)-\frac{\delta}{4}\sqrt{\mu_n-\mu}\\
&<&c_{\mu_n},
\end{eqnarray*}
for $n$ large enough, contradiction.
\end{proof}

\begin{lemma}\label{exist1}
 If $\mu\mapsto c_\mu$ is differentiable at $\mu$ then $c_\mu$ is a critical value of $I_\mu$.
\end{lemma}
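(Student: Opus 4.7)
The plan is to produce a critical point of $I_\mu$ at level $c_\mu$ by extracting a limit of the bounded Palais--Smale sequence supplied by Lemma \ref{PS}. Let $(u_n)\subset E\cap C^\infty(M)$ satisfy $\|u_n\|^2\leq C$, $I_\mu(u_n)\to c_\mu$, and $\|I'_\mu(u_n)\|\to 0$. Since $E$ is a Hilbert space and $H^m(M)$ embeds compactly into $L^q(M)$ for every $q<\infty$, up to a subsequence $u_n\rightharpoonup u$ weakly in $E$, $u_n\to u$ strongly in every $L^q(M)$, and $u_n\to u$ a.e.\ on $M$; the linear constraint $\int_M u\,d\mu_g=0$ is preserved under weak convergence, so $u\in E$.

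The crux is to upgrade a.e.\ convergence of $e^{2mu_n}$ to strong $L^p$ convergence. Arguing as in the proof of Lemma \ref{derivI}, the Young-type inequality
$$2pm\,u_n\leq m\Lambda_1\frac{u_n^2}{\|u_n\|^2}+\frac{p^2 m}{\Lambda_1}\|u_n\|^2,$$
together with \eqref{ams} and the uniform bound $\|u_n\|\leq C$, yields $\sup_n\int_M e^{2pm u_n}\,d\mu_g<\infty$ for every $p<\infty$. Vitali's theorem then gives $e^{2mu_n}\to e^{2mu}$ in $L^p(M)$ for every $p<\infty$; in particular $\int_M e^{2mu_n}\,d\mu_g\to\int_M e^{2mu}\,d\mu_g>0$ and, for each fixed $v\in E\subset L^{p'}(M)$, $\int_M v\,e^{2mu_n}\,d\mu_g\to\int_M v\,e^{2mu}\,d\mu_g$. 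Combined with the weak convergence $\int_M \Delta_g^{m/2}u_n\cdot\Delta_g^{m/2}v\,d\mu_g\to\int_M \Delta_g^{m/2}u\cdot\Delta_g^{m/2}v\,d\mu_g$, passing to the limit in $\langle I'_\mu(u_n),v\rangle\to 0$ shows $\langle I'_\mu(u),v\rangle=0$ for every $v\in E$, so $u$ is a critical point of $I_\mu$ on $E$.

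It remains to show $I_\mu(u)=c_\mu$. Since $\|u_n\|\leq C$ and $\|I'_\mu(u_n)\|\to 0$, we have $\langle I'_\mu(u_n),u_n\rangle\to 0$, i.e.
$$\|u_n\|^2=\mu\frac{\int_M u_n e^{2mu_n}\,d\mu_g}{\int_M e^{2mu_n}\,d\mu_g}+o(1);$$
using $u_n\to u$ in $L^2$ together with the $L^2$-convergence of $e^{2mu_n}$ from the previous step, the right-hand side tends to $\mu\int_M u\,e^{2mu}\,d\mu_g/\int_M e^{2mu}\,d\mu_g$, and this equals $\|u\|^2$ by testing $I'_\mu(u)=0$ against $u$ itself. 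Thus $\|u_n\|^2\to\|u\|^2$, which paired with $\int_M e^{2mu_n}\,d\mu_g\to\int_M e^{2mu}\,d\mu_g$ forces $I_\mu(u_n)\to I_\mu(u)$, so $I_\mu(u)=c_\mu$. The main obstacle would be the notorious failure of compactness for exponential-type nonlinearities, whereby Palais--Smale sequences can concentrate and lose mass; here that difficulty is sidestepped because the boundedness $\|u_n\|\leq C$ — itself a consequence of the differentiability hypothesis on $\mu\mapsto c_\mu$ via Struwe's monotonicity trick — is exactly the threshold that activates the Adams--Moser--Trudinger inequality and forces genuine $L^p$ compactness of $e^{2mu_n}$.
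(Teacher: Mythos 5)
Your proof is correct and follows essentially the same route as the paper: extract a weak limit of the bounded Palais--Smale sequence from Lemma \ref{PS}, use Fontana's inequality \eqref{ams} to obtain strong $L^p$ convergence of $e^{2mu_n}$, and pass to the limit. The only (cosmetic) differences are that you invoke Vitali's theorem where the paper uses an explicit truncation, and you establish criticality of $u$ and then $\|u_n\|\to\|u\|$ separately, whereas the paper tests $I'_\mu(u_n)$ against $u_n-u$ to obtain strong convergence in a single step.
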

\begin{proof}
By Lemma \ref{PS} there exists a bounded sequence $(u_n)$ in $E$ such that $I'_\mu(u_n)\rightarrow 0$ and $I_\mu(u_n)\to c_\mu$. We may assume that $u_n$ converges weakly in $E$ and almost everywhere to a function $u$. Moreover we can use Fontana's inequality together with the inequality $8mu\leq m\Lambda_1\tfrac{u^2}{||u||^2} +\tfrac{16m}{\Lambda_1}||u||^2$ as in the proof of Lemma \ref{derivI} to show that $e^{2mu_n}$ and $e^{2mu}$ are uniformly bounded in $L^4$. Observing that by dominated convergence one has for $N>0$
$$\min\{e^{2mu_n},N\}\to \min\{e^{2mu},N\}\quad\text{in }L^2(M,d\mu_g)$$
as $n\to\infty$ and that
$$\sup_{n\in\mathbb{N}}\|\min\{e^{2mu_n},N\} -e^{2mu_n}\|_{L^2}^2\le \frac{1}{N^2}\sup_{n\in\mathbb{N}}\|e^{2mu_n}\|_{L^4}^4\to 0\quad \text{as }N\to \infty,$$
we infer that $e^{2mu_n}\rightarrow e^{2mu}$ in $L^2$. Then we have
$$
o(1)=\langle I'_\mu(u_n),u_n-u\rangle=\|u_n-u\|^2+o(1),
$$
with error $o(1)\to 0$ as $n\to\infty$. This proves that $u_n\to u$ in $E$, hence $u$ is a critical point of $I_\mu$ with $I_\mu(u)=c_\mu$.
\end{proof}

\section{Compactness and proof of Theorem \ref{maintrm}}\label{sec4}

The following theorem follows from \cite[Thm. 2]{mar2}, compare also \cite{BM}, \cite{LS}, \cite{DR}, \cite{mal} and \cite{MP}.

\begin{trm}\label{trmMP1}
Let $u_k\in C^\infty(M)$ be a sequence of solutions to
\begin{equation}\label{paneitz}
 (-\Delta_g)^m u_k + \lambda_k=\lambda_k \frac{e^{2mu_k}}{\int_Me^{2mu_k}d\mu_g},
\end{equation}
where $\lambda_k\rightarrow \lambda$ are positive real numbers. Then one of the following is true:
\begin{itemize}
\item[(i)] Up to a subsequence $u_k\to u_0$ in $C^{2m-1}(M)$ for some $u_0\in C^\infty(M)$.
\item[(ii)] Up to a subsequence, $\lim_{k\to\infty}\max_M u_k=\infty$ and there is a positive integer $N$ such that
\begin{equation}\label{quant}
\lim_{k\to\infty}\lambda_k=N\Lambda_1.
\end{equation}
\end{itemize}
\end{trm}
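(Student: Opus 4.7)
The plan is to recognize \eqref{paneitz} as a prescribed $Q$-curvature equation with a constant lower-order perturbation and then reduce directly to the compactness theorem from \cite{mar2}. Since adding a constant to $u_k$ leaves \eqref{paneitz} invariant, I would first normalize so that $\int_M u_k\, d\mu_g = 0$. Setting $V_k := \lambda_k/\int_M e^{2mu_k}\, d\mu_g$, the equation rewrites as
\[
(-\Delta_g)^m u_k \;=\; V_k\, e^{2mu_k} - \lambda_k,
\]
which is precisely of the form studied in \cite{mar2}. Jensen's inequality together with the normalization gives $\int_M e^{2mu_k}\, d\mu_g \geq 1$, hence $0 < V_k \leq \lambda_k$ is uniformly bounded, while integrating the equation over $M$ and using $\vol(M)=1$ yields the $L^1$ identity $\int_M V_k\, e^{2mu_k}\, d\mu_g = \lambda_k$.

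Next I would apply \cite[Thm. 2]{mar2} to $(u_k)$. That theorem provides the following alternative: either $u_k$ stays bounded in $C^{2m-1}$ and subconverges smoothly to a solution of a limit equation, which is case (i); or $\max_M u_k \to \infty$ and blow-up concentrates at a nonempty finite set $\{x^{(1)},\ldots,x^{(I)}\} \subset M$. The deep content from \cite{mar2} is that, after rescaling around each $x^{(j)}$, the sequence converges to a standard bubble on $\mathbb{R}^{2m}$ whose total $Q$-curvature equals $\Lambda_1 = (2m-1)!\vol(S^{2m})$, and that no mass is lost either in the neck regions or away from the concentration points.

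Combining this bubble quantization with the identity $\int_M V_k\, e^{2mu_k}\, d\mu_g = \lambda_k$ yields, in the limit, $\lambda_k \to I\,\Lambda_1$, which is \eqref{quant} with $N := I \in \mathbb{N}$, $N \geq 1$, proving case (ii). The main obstacle — the blow-up analysis of higher-order conformal equations on closed manifolds and the quantization of the bubble profiles in terms of the round sphere — is precisely the content of \cite{mar2} and is thus imported as a black box; in the present paper the verification amounts to bookkeeping, namely confirming the uniform positivity and boundedness of $V_k$ and the admissibility of the constant source term $-\lambda_k$, both of which follow from our normalization $\int_M u_k\, d\mu_g = 0$ and $\vol(M) = 1$.
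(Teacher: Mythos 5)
Your proposal takes essentially the same route as the paper: normalize $u_k$ to mean zero, set $V_k:=\lambda_k/\int_M e^{2mu_k}\,d\mu_g$, observe the identity $\int_M V_k e^{2mu_k}\,d\mu_g=\lambda_k$, and import the concentration--quantization result of \cite{mar2} to convert bubble quantization into $\lambda_k\to N\Lambda_1$. The one place you are slightly too quick is the assertion that the rewritten equation is ``precisely'' of the form in \cite[Thm.~2]{mar2}: that theorem is stated for the Paneitz equation $P_g^{2m}u_k+Q_g=Q_k e^{2mu_k}$ with the \emph{fixed} $Q$-curvature $Q_g$ as the source, whereas here the source is the varying constant $\lambda_k$ (and the operator is $(-\Delta_g)^m$ rather than $P_g^{2m}$), so one must invoke --- as the paper does, citing also \cite{DR} --- the observation that the proof of \cite{mar2} extends to $L_g u_k+f_k=h_k e^{2mu_k}$ with $L_g=(-\Delta_g)^m+A_k$ and $f_k\to f_0$, $h_k\to h_0$ in $C^1$; your phrase ``admissibility of the constant source term'' gestures at this but should be made explicit.
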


\begin{proof} In \cite{mar2} the equation
$$P_{g}^{2m}u_k+Q_g=Q_ke^{2mu_k}$$
is treated, where $P_g^{2m}$ is the Paneitz (or GJMS) operator of the Riemannian manifold $(M,g)$, $Q_g\in C^\infty(M)$ (it is the $Q$-curvature of $(M,g)$) and $Q_k\to Q_0$ in $C^1(M)$ is a given sequence. Under these assumptions it is proven that up to a subsequence either
\begin{itemize}
\item[(i)] $u_k\to u_0$ in $C^{2m-1}(M)$ for some $u_0\in C^\infty(M)$, or
\item[(ii)] $\lim_{k\to\infty}\max_M u_k=\infty$ and there is a positive integer $N$ such that
\begin{equation}\label{quant0}
\lim_{k\to\infty}\int_M Q_k e^{2mu_k}d\mu_g=N\Lambda_1.
\end{equation}
\end{itemize}
But in fact the proof of \cite{mar2} applies to more general equations of the form
\begin{equation}\label{general}
L_g u_k + f_k=h_k e^{2mu_k},
\end{equation}
where
\begin{enumerate}
\item $L_g$ is any differential operator of the form
$L_g=(-\Delta_g)^m+A_k$, where $A_k$ is a differential operator of order $2m-1$ at most and whose coefficients converge in $C^1$;
\item $f_k\to f_0$ in $C^1$ and $h_k\to h_0$ in $C^1$,
\end{enumerate}
see e.g. \cite{DR}. In this case the conclusion is that if $(u_k)$ is not precompact in $C^{2m-1}(M)$, then up to a subsequence
\begin{equation}\label{quant2}
\lim_{k\to\infty}\int_{M}h_k e^{2mu_k}d\mu_g=N\Lambda_1
\end{equation}
for some $N\in \mathbb{N}$.

Solutions to \eqref{paneitz} are also solutions to \eqref{general} with $f_k\equiv \lambda_k\to\lambda$ and
$$h_k\equiv\frac{\lambda_k}{\int_Me^{2mu_k}d\mu_g}\to h_0\equiv const\ge 0\quad \text{as }k\to\infty,$$
up to a subsequence. If the sequence $(u_k)$ is not precompact in $C^{2m-1}(M)$, then \eqref{quant2} implies \eqref{quant} at once.
\end{proof}

\medskip

\noindent\emph{Proof of Theorem \ref{maintrm} (completed).} For $\lambda\in]\Lambda_1,\lambda_1/2m[$, $\lambda\not\in \Lambda_1\mathbb{N}$, consider a sequence $\lambda_k<\lambda$ with $\lambda_k\to\lambda$ such that for every $k>0$ there is a solution $u_k\in E$ to \eqref{paneitz} with $I_{\lambda_k}(u_k)=c_{\lambda_k}$. That such a sequence $(\lambda_k,u_k)$ exists was shown in Lemma \ref{exist1}. Moreover Lemma \ref{poincare} implies that $c_{\lambda}>0$.
Theorem \ref{trmMP1} then implies that (up to a subsequence) $u_k\to u_\lambda$ in $C^{2m-1}(M)$, hence smoothly, for some function $u_\lambda\in C^\infty(M)$, which also satisfies \eqref{eq3}. Moreover, since $c_{\lambda_k}\ge c_\lambda$, we have
$$I_\lambda(u_\lambda)=\lim_{k\to\infty}I_{\lambda_k}(u_k)\geq c_\lambda>0,$$
hence showing that $u_\lambda\not\equiv 0$, as wanted.
\hfill $\square$

\section{Non-existence for small $\lambda$}\label{sec5}

We also have a non-existence result for $\lambda$ small enough, analogous to \cite[Thm. 5.10]{ST}.

\begin{trm}
There exists a constant $\Lambda_0>0$ such that for $\lambda\in[0,\Lambda_0[$, $u\equiv 0$ is the only solution to $\eqref{eq3}$ in $E$.
\end{trm}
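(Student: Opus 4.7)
The plan is to combine the compactness Theorem \ref{trmMP1} with a Taylor-type expansion of $e^{2mu}$ near $u\equiv 0$. I would argue by contradiction, supposing that there exist $\lambda_k\to 0^+$ and non-trivial solutions $u_k\in E$ to \eqref{eq3} with $\lambda=\lambda_k$.

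The first step is to pass to a smoothly convergent subsequence using Theorem \ref{trmMP1}. Case (ii) is ruled out since it would require $\lambda_k\to N\Lambda_1\geq \Lambda_1>0$; therefore case (i) applies and, up to a subsequence, $u_k\to u_\infty$ in $C^{2m-1}(M)$. Passing to the limit in the equation gives $(-\Delta_g)^m u_\infty=0$ with $u_\infty\in E$, so testing against $u_\infty$ and integrating by parts yields $\|u_\infty\|^2=0$, hence $u_\infty\equiv 0$. In particular $\|u_k\|_{C^0}\to 0$.

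The second step is to test \eqref{eq3} against $u_k$ itself. Since $\int_M u_k\,d\mu_g=0$, this produces the identity
\[
\|u_k\|^2=\lambda_k\int_M u_k\,\frac{e^{2mu_k}}{\int_M e^{2mu_k}\,d\mu_g}\,d\mu_g.
\]
I would expand the right-hand side around $u\equiv 0$ by writing $e^{2mu_k}=1+2mu_k+r_k$ with $|r_k|\leq 2m^2 u_k^2 e^{2m|u_k|}$, use $\int_M u_k\,d\mu_g=0$ once more to cancel the constant term, apply the Poincar\'e inequality $\|u_k\|_{L^2}^2\leq C_0^m\|u_k\|^2$ to the quadratic term, and invoke Jensen's inequality $\int_M e^{2mu_k}\,d\mu_g\geq 1$ to bound the denominator from below. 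These ingredients combine to give an estimate of the form
\[
\|u_k\|^2\leq \lambda_k\bigl(2mC_0^m+C\|u_k\|_{C^0}\bigr)\|u_k\|^2,
\]
where $C$ depends only on an upper bound for $\|u_k\|_{C^0}$. Since $\lambda_k\to 0$ and $\|u_k\|_{C^0}\to 0$, the factor on the right becomes strictly less than $1$ for $k$ large, forcing $u_k\equiv 0$ in contradiction to our standing assumption.

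The only delicate ingredient is the uniform control of the Taylor remainder $r_k$, which is harmless as soon as $\|u_k\|_{C^0}$ is uniformly bounded; this is precisely what the $C^{2m-1}$ convergence from the compactness step supplies. I do not anticipate any further obstacle.
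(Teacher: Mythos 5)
Your proof is correct, but it takes a genuinely different route from the one in the paper. Both proofs share the final linearization step: testing the equation against $u$, writing $e^{2mu}=1+2mu+O(u^2)$, exploiting $\int_M u\,d\mu_g=0$, the Poincar\'e inequality $\int_M u^2\,d\mu_g\le C_0^m\|u\|^2$, and Jensen's inequality $\int_M e^{2mu}\,d\mu_g\ge 1$ to conclude that $\|u\|=0$ once $\sup_M|u|$ is sufficiently small. The difference lies entirely in how the $C^0$ smallness is obtained. You invoke the compactness result (Theorem \ref{trmMP1}): since $\lambda_k\to 0<\Lambda_1$, alternative (ii) cannot occur, so $u_k\to u_\infty$ in $C^{2m-1}(M)$, and the limit equation $(-\Delta_g)^m u_\infty=0$ in $E$ forces $u_\infty\equiv 0$, hence $\|u_k\|_{C^0}\to 0$. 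The paper instead proceeds directly and self-containedly: it writes $u(y)$ via the Green function of $(-\Delta_g)^m$, whose leading term is $\tfrac{2}{\Lambda_1}\log\tfrac{1}{d_g(x,y)}$, and controls the logarithmic singularity against the density $e^{2mu}/\int_M e^{2mu}$ using the elementary inequality $ab\le e^a+b(\log b-1)$ with $a=\log(1/d_g)$, $b=e^{2mu}$; after absorbing terms this yields $\|u\|^2\le C\lambda^2$ and then $\sup_M|u|\le C\lambda$ directly, with an explicit threshold of order $\Lambda_1/(8m)$.

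What each approach buys: the paper's argument is quantitative (it produces a concrete $\Lambda_0$) and uses only the Green function of $(-\Delta_g)^m$, making Section 5 independent of the heavy blow-up analysis underlying Theorem \ref{trmMP1}. Your argument is shorter and conceptually cleaner once that compactness theorem is available, but it is non-constructive (a contradiction argument yields no explicit $\Lambda_0$) and makes the non-existence statement logically dependent on a far deeper result than it needs. Both are valid; the paper's choice keeps the non-existence result elementary and explicit.

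One small remark for completeness: in your contradiction setup you should note that any $\lambda_k=0$ forces $u_k\equiv 0$ trivially, so one may assume $\lambda_k>0$, which is needed to apply Theorem \ref{trmMP1} as stated (it requires the $\lambda_k$ to be positive). Also, both proofs implicitly use that a weak solution $u\in E$ of \eqref{eq3} is smooth (needed for the Green representation in the paper and for the hypotheses of Theorem \ref{trmMP1} in yours); this follows by standard bootstrap using Fontana's inequality.
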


\begin{proof}
The Green function for $(-\Delta_g)^m$ is of the form 
$$
G_y(x)=\frac{2}{\Lambda_1}\log\frac{1}{d_g(x,y)} + \gamma(x,y),
$$
where $\gamma$ is smooth on $M\times M$.
If $u\in E$ solves \eqref{eq3}, then
\begin{equation}\label{8}
\begin{split}
u(y)&=\int_M(-\Delta_g)^mu G_y d\mu_g=
\lambda\frac{\int_Me^{2mu}G_yd\mu_g}{\int_Me^{2mu}d\mu_g}
\\
&\leq
\lambda\|\gamma\|_{L^\infty}
+
\frac{2\lambda}{\Lambda_1}\frac{\int_M\log\left(\frac{1}{d_g(y,x)}\right)e^{2mu(x)}d\mu_g(x)}{\int_Me^{2mu}d\mu_g}.
\end{split}
\end{equation}
We now use the inequality $ab\leq e^a+b(\log b-1)$ which holds for $b\in \R{}, a\in\R{+}$, and which follows from the identity
$$\sup_{a\in\R{}}\{ab-e^a\}=b(\log b-1),$$
choosing $a=-\log (d_g(y,\cdot))$, $b=e^{2mu}$, hence getting
$$
\log\left(\frac{1}{d_g(y,\cdot)}\right)e^{2mu}\leq \frac{1}{d_g(y,\cdot)} + 2mu e^{2mu}  -e^{2mu},
$$
and recalling that by the Jensen inequality $\int_Me^{2mu}d\mu\geq 1$, we infer
$$
\frac{\int_M\log\left(\frac{1}{d_g(y,\cdot)}\right)e^{2mu}d\mu_g}{\int_Me^{2mu}d\mu_g}\leq \frac{C}{2m-1} + \frac{2m\int_Me^{2mu} u d\mu_g}{\int_Me^{2mu} d\mu_g}.
$$
We now use \eqref{eq3} and notice that the above right-hand side does not depend on $y$ to show
\begin{equation}\label{supu}
\begin{split}
 \|u\|^2&=\lambda\frac{\int_Me^{2mu} u d\mu_g}{\int_Me^{2mu} d\mu_g}\leq \lambda \sup_M u \leq\frac{2\lambda^2C}{\Lambda_1(2m-1)}+\frac{4m\lambda}{\Lambda_1}\|u\|^2+\lambda^2\|\gamma\|_{L^\infty}\\
 &\le C\lambda^2+\frac{4m\lambda}{\Lambda_1}\|u\|^2.
\end{split}
\end{equation}
Then for $\lambda<\frac{\Lambda_1}{8m}$ we obtain
$$
\|u\|^2\leq\frac{C\Lambda_1\lambda^2}{\Lambda_1-4m\lambda}\leq C\lambda^2,
$$
and \eqref{supu} gives $\sup_M |u|\le C\lambda$ for $\lambda>0$ small enough. Therefore $|e^{2mu}-1|\leq e^{C\lambda}u$ and, recalling that $\int_M ud\mu_g=0$, we get
$$
\|u\|^2=\lambda\frac{\int_M(e^{2mu} -1)u d\mu_g}{\int_Me^{2mu} d\mu_g}\leq \lambda e^{C\lambda}\|u\|^2_{L^2}\leq C\lambda\|u\|^2.
$$
For $\lambda>0$ small enough this implies $\|u\|=0$, hence $u\equiv 0$, and this concludes the proof.
\end{proof}

\end{document}